\newtheorem{theorem}{Theorem}[section]
\newtheorem{lemma}[theorem]{Lemma}
\theoremstyle{definition}
\newtheorem{definition}[theorem]{Definition}
\newtheorem*{conj}{Conjecture}
\theoremstyle{remark}
\numberwithin{equation}{section}
\newcommand{\Z}{\mathbb{Z}}
\newcommand{\Q}{\mathbb{Q}}
\newcommand{\R}{\mathbb{R}}
\newcommand{\OLU}{{\mathcal{O}_L^*}}
\newcommand{\Olu}{{\mathcal{O}_l^*}}
\newcommand{\abs}[1]{\lvert#1\rvert}
\newcommand{\norm}[2]{\lVert#1\rVert_{#2}}
\newcommand{\LOG}{\operatorname{LOG}}
\newcommand{\Gal}{\operatorname{Gal}}
\newcommand{\lof}{\log\left(\tfrac{1+\sqrt{5}}{2}\right)}
\newcommand{\lofs}{\log^2\left(\tfrac{1+\sqrt{5}}{2}\right)}
\begin{document}

\title[A real quartic Galois case of the B--RV conjecture]
{Bertrand's and Rodriguez Villegas' Conjecture for real quartic Galois extensions of the rationals}

%    Information for first author
\author{DOHYEONG KIM}
\address{Department of Mathematical Sciences and Institute for Data Innovation in Science, Seoul National University, Gwanak-ro 1, Gwankak-gu, Seoul, South Korea 08826}
\email{dohyeongkim@snu.ac.kr}

%    Information for second author
\author{SEUNGHO SONG}
%    Address of record for the research reported here
\address{Department of Mathematical Sciences, Seoul National University, Gwanak-ro 1, Gwankak-gu, Seoul, South Korea 08826}
\email{shsong0611@snu.ac.kr}

%    General info
\date{\today}
\keywords{Bertrand--Rodriguez Villegas conjecture, units, heights, Lehmer's conjecture
}

\begin{abstract}
      The conjecture due to Bertrand and Rodriguez Villegas asserts that the 1-norm of the nonzero element in an exterior power of the units of a number field has a certain lower bound. 
     For the exterior square case of totally real quartic extensions of the rationals, Costa and Friedman gave a lower bound of 0.802. 
     We prove that the bound can be improved to 1.134 when the extension is further assumed to be  Galois.
\end{abstract}

\maketitle

\section{Introduction}

While Dirichlet's theorem determines the rank of the unit group in a given algebraic number field in terms of its Archimedean places, less is known about its geometry when viewed as a lattice in a Euclidean space via the associated logarithm embedding. 
The resulting lattice and its exterior powers are conjecturally subject to certain numerical constraints \cite{Be1997, CFS2022}.
We begin by introducing notation to clarify the nature of this numerical constraint.

\begin{definition}
    Let~$L$ be a number field and~$\mathcal{A}_L$ be the set of its Archimedean places. 
    Then the logarithmic embedding of the units~$\LOG:\OLU \rightarrow \R^{\mathcal{A}_L}$ into a Eucliedean space is defined by
    \begin{equation*}
        (\LOG(\gamma))_v:=e_v\log\abs{\gamma}_v 
        \text{\quad where \quad}  e_v:=\begin{cases}
        1\ &\text{if}\ v\ \text{is real},\\
        2\ &\text{if}\ v\ \text{is complex}\end{cases}
    \end{equation*}
    for~$\gamma \in \OLU$ and~$v \in \mathcal{A}_L$. 
    Here,~$\abs{\ \cdot \ }_v$ is the absolute value associated to~$v$ extending the absolute value on~$\Q$.
\end{definition}

%   Def. of 1-norm
Consider the orthonormal basis ~$\{\delta^v\}_{v \in \mathcal{A}_L}$ of~$\R^{\mathcal{A}_L}$ given by 
\begin{equation*}
    \delta_{w}^{v}:=\begin{cases}
    1\ &\text{if}\ \ w=v,\\
    0\ &\text{if}\ \ w\neq v \end{cases}
\end{equation*}
for~$w\in \mathcal{A}_L$.
Let~$\mathcal{A}_L^{[j]}$ be the set of subsets of~$\mathcal{A}_L$ having cardinality~$j$.
For each ~$I\in\mathcal{A}_L^{[j]}$, fix an ordering ~$\{v_1,v_2,...,v_j\}$ of elements of~$I$.
Define
\begin{equation*}
    \delta^{I}:=\delta^{v_1}\wedge \delta^{v_2} 
    \wedge \cdots \wedge \delta^{v_j}
\end{equation*}
to get an orthonormal basis ~$\{\delta^{I}\}_{I\in\mathcal{A}_L^{[j]}}$ of ~$\bigwedge^j\R^{\mathcal{A}_L}$.
For~$w=\sum\limits_{I \in \mathcal{A}_L^{[j]}} c_I\delta^I \in \bigwedge^j\R^{\mathcal{A}_L}$, define its 1-norm and 2-norm as
\begin{equation*}
    \norm{w}{1}:=\sum\limits_{I \in \mathcal{A}_L^{[j]}}\abs{c_I},\qquad \norm{w}{2}:=\bigg(\sum\limits_{I \in \mathcal{A}_L^{[j]}}c_I^2\bigg)^{1/2}
\end{equation*}
respectively.

Our aim in this paper is to investigate a lower bound for ~$\norm{w}{1}$ for a nonzero element ~$w$ of ~$\bigwedge^j\LOG(\OLU)$.
For a positive integer~$n$, let~$L_n$ denote the set of subfields of~$\R$ that are Galois extension of~$\Q$ of degree~$n$. 
For such fields,~$\operatorname{rank}(\OLU)=n-1$ by Dirichlet's unit theorem.
For~$1\leq j\leq n-1$, define a real number~$A_{n,j}$ as 
\begin{equation}
    A_{n,j}:=\inf_{\substack{L\in L_n \\ w\in\bigwedge^j\LOG(\OLU)\backslash\{0\}}}\{\norm{w}{1}\}.
\end{equation}
This constant is directly related to the real Galois extension case of Bertrand's and Rodriguez Villegas' conjecture.
The conjecture stated in \cite{CFS2022} is as follows.
\begin{conj}[Bertrand--Rodriguez Villegas]
    There exist two absolute constants \\~$c_0>0$ and~$c_1>1$ such that for any number field~$L$ and any
    ~$j \in \Z_{>0}$, the following inequality holds
    \begin{equation*}
        \norm{w}{1} \geq c_0c_1^j
    \end{equation*}
    for any nonzero~$w \in \bigwedge^j\LOG(\OLU) \subset \bigwedge^j\R^{\mathcal{A}_L}$.
\end{conj}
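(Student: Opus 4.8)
We outline a strategy for the conjecture as stated, while being candid that it is open: its $j=1$ case already subsumes Lehmer's conjecture, so what follows is a program rather than a complete argument.

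The plan is to work inside the trace-zero hyperplane $H=\{x\in\R^{\mathcal{A}_L}:\sum_v x_v=0\}$, in which $\LOG(\OLU)$ lies by the product formula, and to decompose the target bound into two independent phenomena: a Euclidean lower bound on the shortest nonzero vector of the exterior power lattice, and a combinatorial amplification of the $1$-norm over the $2$-norm. Concretely, since $\norm{w}{1}\geq\norm{w}{2}\geq\lambda_1(\bigwedge^j\LOG(\OLU))$, and since, up to factors depending only on the rank $r:=\abs{\mathcal{A}_L}-1$, the shortest vector of a $j$-th exterior power is comparable to the product $\lambda_1\cdots\lambda_j$ of the smallest successive minima of $\LOG(\OLU)$ (by Minkowski's second theorem applied to exterior powers), one would first try to control these minima from below and then recover the missing exponential factor from the $\ell^1$-geometry.

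I would anchor the argument at the two extreme values of $j$. For $j=1$ the quantity is $\norm{\LOG(u)}{1}=2[L:\Q(u)]\log M(u)$, twice the logarithmic Mahler measure of a unit, so a uniform positive bound here is exactly a Lehmer-type gap. For $j=r$ the top exterior power is generated by a single vector whose $2$-norm is a constant multiple of the regulator $R_L$, and Zimmert's regulator bound, which grows exponentially in $r$, already delivers the desired exponential lower bound at that end; the hope is to interpolate. The decisive difficulty is that the smallest minima can be well below $1$ — already the fundamental unit $\varepsilon$ of $\Q(\sqrt 5)$ has $\norm{\LOG(\varepsilon)}{1}=2\lof<1$ — so $\lambda_1\cdots\lambda_j$ typically \emph{decays}, and the exponential growth must instead come from the ratio $\norm{w}{1}/\norm{w}{2}$. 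The engine would be an amplification lemma: writing $w=\sum_I c_I\delta^I$ with $c_I$ the $j\times j$ minors of a generating matrix, and using that a unit lattice cannot concentrate its valuations on a few places, one argues that the number of non-negligible minors is comparable to $\binom{\abs{\mathcal{A}_L}}{j}$, forcing $\norm{w}{1}$ to exceed a constant multiple of $\sqrt{\binom{\abs{\mathcal{A}_L}}{j}}\,\norm{w}{2}$. In the regime $\abs{\mathcal{A}_L}\geq 2j$ this spreading factor is exponential in $j$ and would overwhelm the decay of the minima, while the complementary regime ($j$ near $r$) is absorbed by the Zimmert anchor; gluing the two regimes would complete the plan.

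The main obstacle is twofold and, in my assessment, currently insurmountable unconditionally. First, the $j=1$ case is equivalent to a uniform lower bound bounded away from $1$ for Mahler measures of algebraic units across \emph{all} number fields, that is, to the units form of Lehmer's conjecture. Second, even granting such a gap, the amplification lemma must be made uniform over all fields, and it genuinely fails for \emph{unbalanced} lattices in which a few units carry almost all of the valuation mass, so that the support of $w$ need not spread over $\binom{\abs{\mathcal{A}_L}}{j}$ minors. Taming this degeneracy — presumably by exploiting Galois and place symmetry, or by finer height inequalities that prevent simultaneous concentration — is where a genuinely new idea is needed. This is precisely why the unconditional statement remains a conjecture, and why a realistic theorem restricts both the field and the degree, as in the real quartic Galois exterior-square case pursued in this paper.
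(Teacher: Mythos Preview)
You have correctly recognised that the displayed statement is a \emph{conjecture}, not a theorem: the paper offers no proof of it, and there is nothing to compare your proposal against. The paper merely records the conjecture as motivation and notes---exactly as you do---that the case $j=1$ is equivalent to Lehmer's conjecture and the case $j=\operatorname{rank}(\OLU)$ to Zimmert's regulator theorem (citing \cite{CFS2022}). Your candid framing of the submission as a programme rather than a proof, together with your identification of the Lehmer obstruction as the decisive bottleneck, is therefore the appropriate response; the paper's actual contribution is the special case $n=4$, $j=2$ treated in Theorem~\ref{main}.
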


Only considering the number fields that are real Galois extensions of ~$\Q$, a consequence of the conjecture is that 
\begin{equation*}
    A_{n,j}\geq c_0c_1^j
\end{equation*}
holds, where the~$c_0,c_1$ are the constants satisfying the conjecture. 
For the specific case of ~$n=4, j=2$, the work of Costa and Friedman \cite{CF1991} implies that
\begin{equation*}
    A_{4,2}>2\sqrt{3} \lofs \approx 0.802,
\end{equation*}
as explained in \textsection\,2.
Throughout this paper, we write $\log^2(x)$ to mean $(\log(x))^2$.
To the best of the authors' knowledge, this was the largest known lower bound for~$A_{4,2}$.

In this paper, we give a larger lower bound for~$A_{4,2}$.
\begin{theorem} \label{main}
    Let~$L$ be a real quartic Galois extension of~$\Q$. Then 
        \begin{equation*} 
            \norm{w}{1} \ge 2\sqrt{6}\lofs\approx 1.134
        \end{equation*} 
    for any nonzero~$w \in \bigwedge^2\LOG(\OLU)$.
    In other words, 
    \begin{equation*}
        A_{4,2} \ge 2\sqrt{6}\lofs\approx 1.134.
    \end{equation*}
\end{theorem}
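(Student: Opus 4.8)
The plan is to reduce the $\ell^1$-norm of a $2$-vector to an explicit piecewise-linear function of three $2\times 2$ minors, bound that function below by the Euclidean norm with an optimal constant, and then feed in a lower bound for the Euclidean $2$-vector — a $2$-dimensional partial regulator — that is sharpened by the Galois hypothesis.

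First I would reduce to decomposable $w$. Since $\LOG(\OLU)$ spans the hyperplane $H=\{\sum_v x_v=0\}$, which has dimension $3$, every element of $\bigwedge^2 H$ is decomposable, so after dividing out the content (a positive integer) we may assume $w=\LOG(u)\wedge\LOG(v)$ for multiplicatively independent units $u,v$. Writing $a=\LOG(u)$, $b=\LOG(v)$ and $m_{ij}=a_ib_j-a_jb_i$, the relations $a_4=-(a_1+a_2+a_3)$ and $b_4=-(b_1+b_2+b_3)$ let me eliminate the three minors meeting the fourth place, so that with $x=m_{12}$, $y=m_{13}$, $z=m_{23}$,
\begin{equation*}
  \norm{w}{1}=\abs{x}+\abs{y}+\abs{z}+\abs{x+y}+\abs{x-z}+\abs{y+z}=:f(x,y,z),
\end{equation*}
a norm on the space of minors.

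Next comes the geometric step. The same substitution gives $\norm{w}{2}^2=\sum_{i<j}m_{ij}^2=Q(x,y,z)$ for an explicit positive-definite form $Q$ (one computes $\det Q=16$), and I would determine the optimal constant $\kappa=\min_{(x,y,z)\neq 0} f/\sqrt{Q}$. A short analysis of the six linear forms defining $f$ shows that their minimal support has size three, so the minimum is attained along directions such as $(1,0,0)$ and $(1,-1,1)$, giving $\kappa=\sqrt{3}$; thus $\norm{w}{1}\geq \sqrt{3}\,\norm{w}{2}$ for every $w\in\bigwedge^2 H$. Combined with the partial-regulator estimate $\norm{w}{2}>2\lof^2$ extracted from Costa--Friedman, this reproduces the old bound $2\sqrt{3}\,\lof^2$ and reduces Theorem~\ref{main} to improving the Euclidean bound to $\norm{w}{2}>3\lof^2$.

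The improvement is where the Galois hypothesis enters, and I would exploit the $\Gal(L/\Q)$-module structure of $\LOG(\OLU)$. For the biquadratic group $(\Z/2)^2$ the three nontrivial characters cut $H$ into three mutually orthogonal lines, each spanned by the image of the fundamental unit $\varepsilon_i$ of a real quadratic subfield $K_i$; for the cyclic group $\Z/4$ one gets one such line together with a $2$-dimensional piece. In either case $\norm{w}{2}$ is the area of the parallelogram spanned by $a$ and $b$, and I would bound it below in terms of the regulators $r_i=\log\varepsilon_i$, using that the least real quadratic regulator is $\lof$, attained only by $\Q(\sqrt{5})$, so that at most one $r_i$ equals $\lof$. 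Crucially, the transitivity of the Galois action on the four Archimedean places forbids a rank-$2$ group of units all trivial at one place, which is exactly the degenerate configuration realizing $f/\sqrt{Q}=\sqrt{3}$; ruling it out, together with a direct estimate on each admissible direction and on the finitely many overlattices of $\langle\varepsilon_1,\varepsilon_2,\varepsilon_3\rangle$ permitted by Kubota's unit index, should yield $\norm{w}{2}>3\lof^2$ and hence the theorem.

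The main obstacle is this last estimate. The constant jumps from $2$ to $3$ only because the two inequalities $\norm{w}{1}\geq\sqrt{3}\,\norm{w}{2}$ and $\norm{w}{2}>3\lof^2$ are extremal in different directions, so the product bound $3\sqrt{3}\,\lof^2$ sits safely below the true infimum yet cannot be reached by either inequality alone; making this precise demands a careful case analysis. The crude bound $r_i\geq\lof$ is by itself insufficient — it barely fails for the half-integral classes $\tfrac12(\LOG\varepsilon_i+\LOG\varepsilon_j)$ arising from a nontrivial unit index — so the argument must genuinely use that the $r_i$ are regulators of \emph{distinct} quadratic fields and therefore cannot all be small at once, and it must handle the cyclic and biquadratic cases, and all admissible unit indices, uniformly.
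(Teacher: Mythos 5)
Your key geometric inequality is correct, and it is a genuinely different route from the paper's. I checked the details: with $(x,y,z)=(m_{12},m_{13},m_{23})$ and $a,b$ in the trace-zero hyperplane $H$, one indeed gets $m_{14}=-(x+y)$, $m_{24}=x-z$, $m_{34}=y+z$, hence your formula for $\norm{w}{1}$, and the optimal constant in $\norm{w}{1}\geq\kappa\norm{w}{2}$ on $\bigwedge^2 H$ is indeed $\kappa=\sqrt{3}$, attained at $(1,0,0)$ and $(1,-1,1)$. Your justification of optimality ("minimal support has size three, so the minimum is attained along\dots") is a non sequitur as stated; the clean argument is that $\min f/\sqrt{Q}$ equals $1/\max\{\sqrt{Q}:f\le 1\}$, the maximum of the convex function $Q$ over the polytope $\{f\le1\}$ is attained at a vertex, and every vertex lies on one of the finitely many lines where at least two of the six linear forms vanish; on those lines the ratio is $\sqrt3$ or $2$. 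This is entirely unlike the paper, which never compares the two norms: it splits into the Klein four-group and cyclic cases and bounds $\norm{w}{1}$ coordinate-wise using explicit generators (fundamental units of the quadratic subfields, and Hasse's relative units with the index $Q\in\{1,2\}$).

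The genuine flaw in your write-up is the quotation of Costa--Friedman, and it makes your entire third step both unnecessary and, as written, a gap. The Euclidean bound that Costa--Friedman give (and that the paper rederives in \textsection\,2 from Pohst's theorem and $\gamma_2=2/\sqrt3$) is $\norm{w}{2}>\tfrac{4}{\gamma_2}\lof^2=2\sqrt{3}\lof^2\approx0.802$ for every nonzero $w\in\bigwedge^2\LOG(\OLU)$ in a totally real quartic field, not $2\lof^2$; the previous record for the $1$-norm is this same number obtained via the trivial comparison $\norm{w}{1}\geq\norm{w}{2}$, not via a $\sqrt3$ factor. Your reduction target $\norm{w}{2}>3\lof^2$ is therefore already known, since $3<2\sqrt3$, and the whole final stage of your proposal --- the Galois-module analysis, the Kubota-index casework, the configuration-exclusion argument, none of which is actually carried out ("should yield\dots") --- can simply be deleted. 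With the correct input your first two steps finish the proof:
\begin{equation*}
    \norm{w}{1}\ \geq\ \sqrt{3}\,\norm{w}{2}\ >\ \sqrt{3}\cdot 2\sqrt{3}\lof^2\ =\ 6\lof^2\ \approx\ 1.389\ >\ 3\sqrt{3}\lof^2 .
\end{equation*}
Note what this buys: the bound $6\lof^2$ is stronger than the paper's $3\sqrt{3}\lof^2$, and it requires neither the Galois hypothesis nor any case analysis --- it holds for every totally real quartic field. So your proposal is salvageable into a correct and sharper proof, but only after discarding its third step and replacing the misremembered bound $2\lof^2$ by the actual Costa--Friedman bound $2\sqrt{3}\lof^2$.
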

As demonstrated in \cite{CFS2022}, the Bertrand--Rodriguez Villegas conjecture is equivalent to Lehmer's conjecture \cite{Le1933} when~$j=1$, and to Zimmert's theorem on regulators \cite{Zi1981} when~$j=\operatorname{rank}(\OLU)$.
Hence the case of ~$n=4, j=2$ is neither covered by Lehmer's conjecture nor Zimmert's theorem. 
While our result only covers this specific case, we obtained a better lower bound, which is~$\sqrt{2}$ times the previous known lower bound. 
In fact, for~$L=\Q(\sqrt{2},\sqrt{5})$ and~$w=\pm \LOG(\tfrac{1+\sqrt{5}}{2}) \wedge \LOG(1+\sqrt{2})$,
we have ~$\norm{w}{1}=8\log(\tfrac{1+\sqrt{5}}{2})\log(1+\sqrt{2})$
and thus
\begin{equation*}
    A_{4,2} \leq 8\lof\log(1+\sqrt{2}) \approx 3.393.
\end{equation*}
Hence our lower bound is no less than a third of an upper bound for~$A_{4,2}$.

The proof of the theorem is done separately for the case when ~$\Gal(L/\Q)$ is a Klein four-group, and the case when ~$\Gal(L/\Q)$ is cyclic.
For the Klein four-group case,
we define a subgroup~$E$ generated by the units of the quadratic subfields of~$L$.
Using the Galois module structure of~$E$, we give a lower bound for ~$\norm{w}{1}$ for nonzero element ~$w$ of~$\bigwedge^2\LOG(E)$ and extend this result to~$\OLU$.
For the cyclic group case, we use the structure of~~$\OLU$ given by Hasse \cite{Ha1948} to obtain a basis of~$\bigwedge^2\LOG(\OLU)$.
We then use the result of Pohst \cite{Po1978} to compute the lower bound.

In \textsection\,2 we derive the best lower bound for ~$A_{4,2}$ known so far, from the work of  Costa and Friedman \cite{CF1991}.
In the remaining sections, we prove the main theorem to give the better lower bound for $A_{4,2}$.
The case when ~$\Gal(L/\Q)$ is isomorphic to the Klein four-group is covered in \textsection\,3, and the case when ~$\Gal(L/\Q)$ is isomorphic to the cyclic group is covered in \textsection\,4.

%acknowledgement
\subsection*{Acknowledgement}
The work of S.S. was supported by the College of Natural Sciences Undergraduate Internship Program from Seoul National University.
D.K. is supported by the National Research Foundation of Korea (NRF)\footnote{the grants No.\,RS-2023-00301976 and No.\,2020R1C1C1A0100681913}.
We also thank the referee for corrections, including those concerning the cyclic case.

%-------------------------------------------------------------------------

\section{The lower bound due to Costa and Friedman} \label{prev}

In this section, we derive the largest lower bound for ~$A_{4,2}$ known so far, to the best of the authors' knowledge.
Using Pohst's result \cite{Po1978}, Costa and Friedman \cite{CF1991} showed that for independent elements~$\epsilon_1,...,\epsilon_j$ of~$\OLU$, the inequality
\begin{equation*} 
    \norm{\LOG(\epsilon_1)\wedge \cdots \wedge \LOG(\epsilon_j)}{2}
    >\left(\frac{n}{\gamma_j}\lofs \right)^{j/2}
\end{equation*}
holds for~$1\leq j<n$, where~$n=[L:\Q]$ and ~$\gamma_j$ is Hermite's constant in dimension~$j$. 
If~$j=n-2=\operatorname{rank}(\OLU)-1$, every element of ~$\bigwedge^{n-2}\LOG(\OLU)$ can be written as ~$d \cdot \LOG(\epsilon_1)\wedge \cdots \wedge \LOG(\epsilon_{n-2})$ where~$d \in \Z$ and~$\LOG(\epsilon_1),...,\LOG(\epsilon_{n-1})$ are the basis of~$\LOG(\OLU)$,
as shown in Lemma 28 of \cite{CFS2022}. 
Therefore, for any nonzero element ~$w$ of ~$\bigwedge^{n-2}\LOG(\OLU)$, the inequality
\begin{equation*}
    \norm{w}{1}\geq\norm{w}{2}
    >\left(\frac{n}{\gamma_j}\lofs \right)^{j/2}
\end{equation*}
holds. 
Restricting to the Galois extensions of~$\Q$, we have
\begin{equation*}
    A_{n,n-2}>\left(\frac{n}{\gamma_{n-2}}\lofs \right)^{\frac{n-2}{2}}
\end{equation*}
and in particular, since~$\gamma_2=\frac{2}{\sqrt{3}}$, we obtain
\begin{equation*}
    A_{4,2}>2\sqrt{3} \lofs \approx 0.802.
\end{equation*}

%-------------------------------------------------------------------------

\section{The Klein four-group Case} \label{klein}
In this section, we prove the main theorem for the case when ~$\Gal(L/\Q)$ is isomorphic to the Klein four-group.
Throughout this section, let~$L$ be a real Galois extension of~$\Q$ with~$\Gal(L/\Q) \cong \Z /2\Z \times \Z /2\Z$.
Let~$\Q(\sqrt{d_1}),\Q(\sqrt{d_2}),\Q(\sqrt{d_3})$ be three quadratic subfields of~$L$, where~$d_1,d_2,d_3$ are square-free integers.
Then we can write~$L=\Q(\sqrt{d_1},\sqrt{d_2})$.
For~$i=1,2,3$, let~$\Gal(L/\Q(\sqrt{d_i}))=\{1,\sigma_i \}$.
Let ~$u_i>1$ be the fundamental unit of~$\Q(\sqrt{d_i})$.
Without loss of generality, let~$1<u_1<u_2<u_3$.
If~$i$ and~$j$ are distinct integers in~$\{1,2,3\}$, since~$\sqrt{d_j} \not\in \Q(\sqrt{d_i})$,~$\sigma_i\in \Gal(L/\Q(\sqrt{d_i}))$ sends~$\sqrt{d_j}$ to its conjugate~$-\sqrt{d_j}$. Thus
\begin{equation} \label{norm}
    u_j\sigma_i(u_j)=\operatorname{N}_{\Q(\sqrt{d_j})/\Q}(u_j)=\pm 1.
\end{equation}
Let~$E=\{u_1^{m_1}u_2^{m_2}u_3^{m_3} | m_1,m_2,m_3 \in \mathbb{Z} \}$.
Then we have the following lemmas, which are immediate consequences of Kubota’s classification of fundamental systems of units in real biquadratic fields \cite[Satz 1]{Ku1956}.

\begin{lemma} \label{indp}
   ~$u_1,u_2,u_3$ are multiplicatively independent. 
    In other words, if integers~$m_1,m_2,m_3$ satisfy~$u_1^{m_1}u_2^{m_2}u_3^{m_3}=1$, then~$m_1=m_2=m_3=0$.
\end{lemma}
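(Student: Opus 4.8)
The plan is to exploit the Galois action: multiplying the given multiplicative relation by its image under a suitably chosen automorphism will isolate a single fundamental unit to an even power, whose positivity then forces the corresponding exponent to vanish.

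Suppose integers $m_1, m_2, m_3$ satisfy $u_1^{m_1} u_2^{m_2} u_3^{m_3} = 1$. First I would apply $\sigma_1$ to this relation. Since $u_1 \in \Q(\sqrt{d_1})$ is fixed by $\sigma_1$, while $\sigma_1$ restricts to the nontrivial automorphism of $\Q(\sqrt{d_j})/\Q$ for $j \in \{2,3\}$ (as $\sigma_1$ sends $\sqrt{d_j}$ to $-\sqrt{d_j}$), multiplying the original relation by its $\sigma_1$-image groups the factors as $(u_j\,\sigma_1(u_j))^{m_j}$ for $j = 2,3$ and as $u_1^{2m_1}$ for the $u_1$-part. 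By the norm identity \eqref{norm}, each factor $u_j\,\sigma_1(u_j)$ equals $\pm 1$, so I obtain $u_1^{2m_1} = \pm 1$. Because $u_1 > 1$ is a positive real, the sign must be $+1$, and hence $m_1 = 0$. By the symmetric argument, applying $\sigma_2$ and $\sigma_3$ and multiplying against the original relation yields $u_2^{2m_2} = \pm 1$ and $u_3^{2m_3} = \pm 1$, forcing $m_2 = m_3 = 0$; this completes the proof.

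The only point that requires care is the bookkeeping of the signs $\pm 1$ arising from \eqref{norm}, and I expect this to be the sole (and minor) obstacle. These signs cause no real difficulty: after the multiplication the surviving unit appears to an even power $u_i^{2m_i} > 0$, so the resulting quantity is a positive real and the negative sign is automatically excluded. No input beyond the norm identity \eqref{norm} and the positivity $u_i > 1$ is needed.
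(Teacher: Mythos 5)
Your proposal is correct and follows essentially the same route as the paper's proof: apply $\sigma_i$ to the relation, multiply against the original so that \eqref{norm} collapses the other two factors to $\pm 1$, and conclude $u_i^{2m_i}=\pm 1$, hence $m_i=0$ by positivity of $u_i>1$. The sign bookkeeping you flag is handled exactly as you describe, so there is nothing to add.
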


\begin{lemma} \label{ind1}
    If~$u\in \OLU$, then~$u^2 \in E$.
\end{lemma}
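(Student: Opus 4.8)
The plan is to relate an arbitrary unit $u \in \OLU$ to the fundamental units $u_1,u_2,u_3$ through the norm maps attached to the subfield lattice of Figure~\ref{firstfig}. First I would record, for each $i \in \{1,2,3\}$, the relative norm
\begin{equation*}
    N_{L/\Q(\sqrt{d_i})}(u) = u\,\sigma_i(u),
\end{equation*}
which lies in $\mathcal{O}_{\Q(\sqrt{d_i})}^*$ because $u$ is a unit and $\Gal(L/\Q(\sqrt{d_i})) = \{1,\sigma_i\}$. Since the unit group of the real quadratic field $\Q(\sqrt{d_i})$ is $\{\pm 1\}\times\langle u_i\rangle$, this gives $u\,\sigma_i(u) = \pm u_i^{a_i}$ for some integer $a_i$ depending on $u$.

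Next I would combine these three relations using the absolute norm. Taking the product over $i=1,2,3$ yields
\begin{equation*}
    \prod_{i=1}^{3} u\,\sigma_i(u) = u^3\,\sigma_1(u)\sigma_2(u)\sigma_3(u).
\end{equation*}
The key observation is that $u\cdot\sigma_1(u)\sigma_2(u)\sigma_3(u) = N_{L/\Q}(u) = \pm 1$, since $\{1,\sigma_1,\sigma_2,\sigma_3\}$ is the full Galois group; hence $\sigma_1(u)\sigma_2(u)\sigma_3(u) = \pm u^{-1}$, and the product above collapses to $\pm u^2$. Substituting the three quadratic-norm relations then gives
\begin{equation*}
    u^2 = \pm\, u_1^{a_1}u_2^{a_2}u_3^{a_3},
\end{equation*}
where the single $\pm$ absorbs all the signs that appeared.

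Finally I would eliminate the sign by positivity: since $L\subset\R$, both $u^2$ and each $u_i$ are positive real numbers, so $u_1^{a_1}u_2^{a_2}u_3^{a_3}>0$, forcing the sign to be $+$. This yields $u^2 = u_1^{a_1}u_2^{a_2}u_3^{a_3}\in E$, as desired.

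I expect the only delicate point to be the bookkeeping of the $\pm 1$ factors arising from both $N_{L/\Q(\sqrt{d_i})}(u)$ and $N_{L/\Q}(u)$, but this is precisely what the final positivity observation sidesteps, since every sign is swallowed into one overall $\pm$ that the real embedding then pins down as $+$. A secondary point to keep straight is that $\sigma_1,\sigma_2,\sigma_3$ are exactly the three nontrivial elements of the Klein four-group, so adjoining the identity recovers the full norm; this is immediate from the group structure displayed in Figure~\ref{firstfig}.
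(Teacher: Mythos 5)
Your proof is correct and follows essentially the same route as the paper's: both pass through the relative norms $u\,\sigma_i(u)=N_{L/\Q(\sqrt{d_i})}(u)=\pm u_i^{m_i}$, multiply the three relations, and identify the product with $\pm u^2$ via $N_{L/\Q}(u)=\pm 1$. The only (cosmetic) difference is in the sign bookkeeping: the paper takes absolute values using $\abs{N_{L/\Q}(u)}=1$, while you invoke positivity of $u^2$ and of the $u_i$ in the real embedding — both settle the sign equally well.
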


We first compute the 1-norm of the elements of~$\bigwedge^2\LOG(E)$. 
$\LOG(E)$ is generated by~$\LOG(u_i)$'s. 
Using~$\eqref{norm}$, we can compute~$\LOG(u_i)$'s in terms of~$u_i$'s as follows:
\begin{align*}
    &\LOG(u_1)=(\log(u_1),\ \ \log(u_1),-\log(u_1),-\log(u_1)) \\ 
    &\LOG(u_2)=(\log(u_2),-\log(u_2),\ \ \log(u_2),-\log(u_2)) \nonumber \\
    &\LOG(u_3)=(\log(u_3),-\log(u_3),-\log(u_3),\ \ \log(u_3)) .\nonumber
\end{align*}
Here, the order of the basis of~$\R^{\mathcal{A}_L}$ is~$\delta^{id},\delta^{\sigma_1},\delta^{\sigma_2},\delta^{\sigma_3}$. 
Then the wedge product of these~$\LOG(u_i)$'s are:
\begin{align*}
    &\LOG(u_2)\wedge\LOG(u_3)=
    (\ \ \ \ \ 0,\ \ \ \ \ 0,\ \ 2X_1,\ \ 2X_1,-2X_1,-2X_1) \\
    &\LOG(u_1)\wedge\LOG(u_3)=
    (-2X_2,-2X_2,\ \ 2X_2,-2X_2,\ \ \ \ \ 0,\ \ \ \ \ 0) \\
    &\LOG(u_1)\wedge\LOG(u_2)=
    (-2X_3,\ \ 2X_3,\ \ \ \ \ 0,\ \ \ \ \ 0,\ \ 2X_3,-2X_3) 
\end{align*}
where~$X_1=\log(u_2)\log(u_3), X_2=\log(u_1)\log(u_3),
X_3=\log(u_1)\log(u_2)$ and the order of the basis of~$\bigwedge^2\R^{\mathcal{A}_L}$ is 
$\delta^{id}\wedge\delta^{\sigma_1}, \delta^{\sigma_2}\wedge\delta^{\sigma_3},
\delta^{id}\wedge\delta^{\sigma_3}, \delta^{\sigma_1}\wedge\delta^{\sigma_2},
\delta^{id}\wedge\delta^{\sigma_2},\delta^{\sigma_1}\wedge\delta^{\sigma_3}$. 
Note that~$X_1 > X_2 > X_3 >0$. 

For~$w \in \bigwedge^2 \LOG(E)$,~$w$ can be written as 
\begin{equation*}
    w=n_1\cdot \LOG(u_2)\wedge\LOG(u_3)+n_2 \cdot \LOG(u_1)\wedge\LOG(u_3) + n_3 \cdot  \LOG(u_1)\wedge\LOG(u_2)
\end{equation*}
where~$n_i$'s are integers. Its 1-norm is 
\begin{align*}
    \norm{w}{1} =&\, 2(\abs{n_2 X_2+n_3 X_3}+\abs{n_2 X_2 - n_3 X_3}+\abs{n_1 X_1+n_2 X_2}+\abs{n_1 X_1 - n_2 X_2} \\
     &+\abs{n_1 X_1+n_3 X_3}+\abs{n_1 X_1 - n_3 X_3}) \\
    =&\, 4(\max\{\abs{n_2X_2},\abs{n_3X_3}\}+\max\{\abs{n_1X_1},\abs{n_2X_2}\}+\max\{\abs{n_1X_1},\abs{n_3X_3}\})
\end{align*}
where the second equality is from the following equation.
\begin{equation} \label{summax}
    \abs{X+Y}+\abs{X-Y}=2\max\{\abs{X},\abs{Y}\}\quad \forall X,Y\in\R.
\end{equation}
We have 
\begin{equation*}
    \max\{\abs{n_1X_1},\abs{n_2X_2}\}+\max\{\abs{n_1X_1},\abs{n_3X_3}\} \geq 2\abs{n_1X_1}
\end{equation*}
and thus 
\begin{equation*}
    4(\max\{\abs{n_2X_2},\abs{n_3X_3}\}+\max\{\abs{n_1X_1},\abs{n_2X_2}\}+\max\{\abs{n_1X_1},\abs{n_3X_3}\}) \geq 8\abs{n_1X_1}
\end{equation*}
holds. Similarly, the above inequality also holds when the right hand side is~$8\abs{n_2X_2}$ or~$8\abs{n_3X_3}$.
Therefore
\begin{align*}
    \norm{w}{1}& = 4(\max\{\abs{n_2X_2},\abs{n_3X_3}\}+\max\{\abs{n_1X_1},\abs{n_2X_2}\}+\max\{\abs{n_1X_1},\abs{n_3X_3}\})\\
    &\geq 8\max\{\abs{n_1X_1},\abs{n_2X_2},\abs{n_3X_3} \} 
\end{align*}
and if~$(n_1,n_2,n_3)\neq 0$, then
\begin{equation} \label{fineq}
    \norm{w}{1}\geq 8\abs{X_3}=8\log(u_1)\log(u_2).
\end{equation}

For the case ~$L=\Q(\sqrt{2},\sqrt{5})$, we have ~$u_1=\frac{1+\sqrt{5}}{2}$, ~$u_2=1+\sqrt{2}$, and ~$u_3=3+\sqrt{10}$.
A simple calculation shows that ~$\sqrt{u_1u_2u_3}$ is in ~$\OLU$, and it is easy to verify that ~$u_1,u_2,\sqrt{u_1u_2u_3}$ generate ~$\OLU$.
Then the group ~$\bigwedge^2\LOG(\OLU)$ is generated by~$\LOG(u_1)\wedge \LOG(u_2)$,~$\frac{1}{2}(\LOG(u_1)\wedge\LOG(u_2)+\LOG(u_1)\wedge\LOG(u_3))$ and~$\frac{1}{2}(\LOG(u_1)\wedge\LOG(u_2)+\LOG(u_2)\wedge\LOG(u_3))$.
Thus for any element~$w$ of~$\bigwedge^2\LOG(\OLU)$,~$2w \in \bigwedge^2\LOG(E)$.
Hence we have
\begin{equation} \label{25 case}
    \norm{w}{1}=\frac{1}{2}\norm{2w}{1}\geq 4\lof\log(1+\sqrt{2})\approx 1.697.
\end{equation}

For the case~$L=\Q(\sqrt{5},\sqrt{13})$,~$u_1=\frac{1+\sqrt{5}}{2}, u_2=\frac{3+\sqrt{13}}{2}, u_3=8+\sqrt{65}$.
Similarly,~$u_1,u_2,\sqrt{u_1u_2u_3}$ generate ~$\OLU$ and therefore 
\begin{equation} \label{513 case}
    \norm{w}{1}=\frac{1}{2}\norm{2w}{1}\geq 4\lof\log\left(\tfrac{3+\sqrt{13}}{2}\right)\approx 2.300
\end{equation}
for any nonzero element~$w$ of~$\bigwedge^2\LOG(\OLU)$.

Now consider the case where~$L\neq \Q(\sqrt{2},\sqrt{5})$ and~$L\neq \Q(\sqrt{5},\sqrt{13})$.
By Lemma~\ref{ind1}, for any~$u\in\OLU$,~$u^2\in E$ and thus~$2\LOG(u) \in \LOG(E)$.
Hence if nonzero~$w$ is in~$\bigwedge^2 \LOG(\OLU)$, then~$4w \in \bigwedge^2 \LOG(E)$.
We have
\begin{equation} \label{thin1}
    \norm{w}{1}=\frac{1}{4}\abs{\abs{4w}}_1 \geq 2\log(u_1)\log(u_2)
\end{equation}
by \eqref{fineq}.
Excluding~$\Q(\sqrt{2},\sqrt{5})$ and~$\Q(\sqrt{5},\sqrt{13})$, 
$\Q(\sqrt{3},\sqrt{5})$ has the smallest possible value of ~$\log(u_1)\log(u_2)$,
which is ~$\lof\log(2+\sqrt{3})$.
This is from the Lemma~\ref{lwbd} below and the fact that~$\lof\log(2+\sqrt{3})<\log(1+\sqrt{2})\log\left(\frac{3+\sqrt{13}}{2}\right)$.
Hence from \eqref{thin1}, we see that
\begin{equation*}
    \norm{w}{1} \geq 2\lof\log(2+\sqrt{3})\approx 1.267.
\end{equation*}
With \eqref{25 case} and \eqref{513 case}, we conclude that if~$\Gal(L/\Q)$ is Klein four-group,
\begin{equation*}
    \norm{w}{1}\geq 2\lof\log(2+\sqrt{3}) >2\sqrt{6}\lof^2
\end{equation*}
for any nonzero~$w \in \bigwedge^2\LOG(\OLU)$.

\begin{lemma} \label{lwbd}
    For square-free integer~$m$, let~$v_m>1$ be the fundamental unit of~$\Q(\sqrt{m})$. 
    Then ~$\tfrac{1+\sqrt{5}}{2},1+\sqrt{2},\tfrac{3+\sqrt{13}}{2},2+\sqrt{3}$ are the four smallest possible values of~$v_m$.
\end{lemma}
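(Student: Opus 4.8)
The plan is to bound the fundamental unit $v_m$ from below by a quantity that grows with $m$, thereby confining the candidates for the four smallest values to a finite explicit list, and then to pin down those values by direct computation. Writing $v_m = x + y\sqrt{m}$ with $x,y\in\Q$, I would first record that $x,y>0$. Indeed, the conjugate $v_m' = x - y\sqrt m$ satisfies $v_m v_m' = \pm 1$, so $\abs{v_m'} = 1/v_m < 1 < v_m$; then $v_m - v_m' = 2y\sqrt m$ equals $v_m - 1/v_m > 0$ (norm $+1$) or $v_m + 1/v_m > 0$ (norm $-1$), forcing $y>0$, and $x = \tfrac12(v_m + v_m')$ is positive in both cases since $v_m > 1 > \abs{v_m'}$. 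The shape of the ring of integers now yields a congruence-dependent bound: if $m\equiv 2,3 \pmod 4$ then $x,y$ are positive integers, so $v_m \ge 1 + \sqrt m$; if $m\equiv 1 \pmod 4$ then $2x,2y$ are positive integers, so $v_m \ge \tfrac{1+\sqrt m}{2}$.

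Next I would use these bounds to reduce to finitely many $m$. The largest of the four proposed values is $2+\sqrt 3 \approx 3.732$, so any field whose fundamental unit does not exceed $2+\sqrt3$ must satisfy the corresponding inequality. For $m\equiv 2,3 \pmod 4$, the bound $1+\sqrt m \le v_m$ forces $\sqrt m \le 1+\sqrt 3$, hence $m \le 7$, leaving $m\in\{2,3,6,7\}$. For $m\equiv 1 \pmod 4$, the bound $\tfrac{1+\sqrt m}{2}\le v_m$ forces $\sqrt m \le 3 + 2\sqrt 3$, hence $m \le 41$, leaving $m \in \{5,13,17,21,29,33,37,41\}$. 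Every other square-free $m$ then has $v_m > 2+\sqrt3$ and may be discarded.

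Finally I would verify the claim on this finite list. For each listed $m$ the fundamental unit is the least-$b$ solution of $a^2 - m b^2 = \pm 1$ (for $m\equiv 2,3$) or of $a^2 - m b^2 = \pm 4$ with $a\equiv b\pmod 2$ (for $m\equiv 1$), this being the standard description of the smallest unit exceeding $1$. Solving these gives $v_5 = \tfrac{1+\sqrt5}{2}$, $v_2 = 1+\sqrt2$, $v_{13} = \tfrac{3+\sqrt{13}}{2}$, and $v_3 = 2+\sqrt3$, while every other listed field exceeds the threshold (for instance $v_6 = 5+2\sqrt6$, $v_7 = 8+3\sqrt7$, $v_{17} = 4+\sqrt{17}$, $v_{21} = \tfrac{5+\sqrt{21}}{2}$, all larger than $2+\sqrt3$). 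Ordering the four survivors yields $\tfrac{1+\sqrt5}{2} < 1+\sqrt2 < \tfrac{3+\sqrt{13}}{2} < 2+\sqrt3$, which is the assertion.

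The one place requiring genuine care is confirming that each of the four exhibited units is truly fundamental rather than a proper power; this is precisely what taking the least positive $b$ guarantees, but it must be checked that no smaller $b$ solves the Pell-type equation (e.g. for $m=13$, the value $b=1$ already produces $\tfrac{3+\sqrt{13}}{2}$). The borderline case $m=41$ also deserves explicit attention, since the crude estimate $\tfrac{1+\sqrt{41}}{2}\approx 3.70$ falls just below $2+\sqrt3$, so one must compute its fundamental unit $32+5\sqrt{41}$ directly to see that it clears the threshold. No step involves a conceptual difficulty; the substance of the argument is organizing this finite verification around the two congruence classes modulo $4$.
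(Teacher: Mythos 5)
Your proof is correct and follows the same skeleton as the paper's: a congruence-dependent lower bound on $v_m$ reduces the problem to a finite list of fields, which is then checked directly. The difference is in the bound itself. You use only positivity and integrality of the coefficients, giving $v_m \geq 1+\sqrt{m}$ (resp.\ $v_m \geq \frac{1+\sqrt{m}}{2}$), which leaves twelve candidates, eight of them in the class $m \equiv 1 \pmod 4$, and forces explicit computation of fundamental units as large as $32+5\sqrt{41}$ and $23+4\sqrt{33}$. The paper instead feeds the Pell equation back into the estimate: since $a^2 = mb^2 \pm 1$ (resp.\ $a^2 = mb^2 \pm 4$), one gets $a \geq \sqrt{mb^2-1} \geq \sqrt{m-1}$ (resp.\ $a \geq \sqrt{m-4}$), hence $v_m \geq \sqrt{m-1}+\sqrt{m}$ (resp.\ $v_m \geq \frac{\sqrt{m-4}+\sqrt{m}}{2}$). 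This sharper bound kills every $m \geq 6$ (resp.\ $m \geq 17$) in one stroke, so the surviving list is exactly $\{2,3,5,13\}$ and no fundamental-unit computations beyond the four named units are required. Your route buys elementarity in the bounding step at the price of a longer finite verification; note also that as written your verification is incomplete, since you list $m=6,7,17,21$ and $41$ but never record $v_{29}=\frac{5+\sqrt{29}}{2}$, $v_{33}=23+4\sqrt{33}$, $v_{37}=6+\sqrt{37}$ --- all well above $2+\sqrt{3}$, so the gap is routine to fill, but those three cases do need to be stated for the argument to be complete.
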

\begin{proof} 
    First, consider the case~$m \equiv 2,3 \pmod 4$. 
    The fundamental unit~$v_m$ is of the form~$a+b\sqrt{m}$ where
   ~$a,b$ are the smallest positive integers satisfying~$a^2 - mb^2=\pm 1$.
    If~$m\geq6$, then 
    \begin{equation*}
        v_m\geq \sqrt{mb^2-1}+b\sqrt{m}\geq\sqrt{m-1}+\sqrt{m}\geq \sqrt{5}+\sqrt{6}>2+\sqrt{3}.
    \end{equation*}
    Now consider the case~$m \equiv 1 \pmod{4}$.
    Then~$v_m$ is of the form~$\frac{a+b\sqrt{m}}{2}$ where
   ~$a,b$ are the smallest positive integers satisfying~$a^2 - mb^2=\pm 4$.
    If~$m\geq 17$,
    \begin{equation*}
        v_m\geq \frac{\sqrt{mb^2-4}+b\sqrt{m}}{2} \geq \frac{\sqrt{m-4}+\sqrt{m}}{2}\geq  \frac{\sqrt{13}+\sqrt{17}}{2}>2+\sqrt{3}.
    \end{equation*}
    Hence~$v_m\leq2+\sqrt{3}$ only for~$m=2,3,5,13$.
    For those~$m$, we have ~$v_5<v_2<v_{13}<v_3$.
\end{proof}

%-----------------------------------------------------------

\section{The Cyclic Group Case} \label{cyclic}

In this section, we prove the main theorem for the case when ~$\Gal(L/\Q)$ is isomorphic to the cyclic group.
Throughout this section, let~$L$ be a real Galois extension of~$\Q$ with~$\Gal(L/\Q) \cong \Z /4\Z$. 
Denote the Galois group as ~$G$.
Let~$\Gal(L/\Q)$ be generated by~$\sigma$. Let~$l$ be the unique quadratic subfield of~$L$.
Let~$u_l>1$ be the fundamental unit of~$l$. Then~$\Olu=\{\pm u_{l}^m \ |\  m \in \Z\}$. First we define relative units as in \cite{Ha1948}.

\begin{definition}
    If~$w \in \OLU$ satisfies~$N_{L/l}(w)=\pm 1$, we say that~$w$ is a \emph{relative unit of~$L$}. 
\end{definition}

Let~$E_{L/l}$ be a group consisting of all relative units of~$L$. 
As in \cite{Gr1979}, let~$E^L$ be a sub~$G$-module of~$\OLU$ generated by elements of ~$\Olu$ and~$E_{L/l}$. 
Let~$Q=[\OLU:E^L]$.
The following two lemmas are due to Hasse \cite[Satz 16 and 22]{Ha1948}.

\begin{lemma} \label{ind2}
    The index~$Q$ is either 1 or 2. Furthermore, ~$Q=2$ if and only if there exists~$u_*\in\OLU$ such that~$N_{L/l}(u_*)=\pm u_l$.
\end{lemma}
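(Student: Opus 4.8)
The plan is to prove Lemma \ref{ind2}, which asserts that $Q=[\OLU:E^L]\in\{1,2\}$ and that $Q=2$ precisely when some $u_*\in\OLU$ has $N_{L/l}(u_*)=\pm u_l$. My strategy rests on analysing the norm map $N_{L/l}:\OLU\to\Olu$ and relating its image to the structure of $E^L$. First I would observe that $E^L$ contains $\Olu$ together with all relative units, i.e.\ all units of norm $\pm1$ down to $l$; thus $E^L$ is exactly the subgroup on which $N_{L/l}$ takes values in $\{\pm u_l^{2m}\}$, once one accounts for how the norm behaves on $\Olu$. Indeed, for $\eta\in\Olu$ we have $N_{L/l}(\eta)=\eta\,\sigma(\eta)\,\sigma^2(\eta)\,\sigma^3(\eta)$ restricted appropriately; since $\sigma$ fixes $l$ up to its nontrivial automorphism, $N_{L/l}(\eta)=\eta^2$ (as $\sigma^2$ fixes $l$ and $\sigma$ acts as the conjugation of $l/\Q$), so the norm of $\Olu$ lands in the squares $\{\pm u_l^{2m}\}$. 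Combined with the relative units contributing $\pm1$, this shows $N_{L/l}(E^L)\subseteq\{\pm u_l^{2m}:m\in\Z\}$.

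The key step is to identify the index $Q$ with the index of the image subgroups inside $\Olu/\{\pm1\}\cong\Z$. I would set up the short exact sequence
\begin{equation*}
    1\longrightarrow E_{L/l}\longrightarrow \OLU \xrightarrow{\;N_{L/l}\;} \Olu,
\end{equation*}
whose image is a subgroup of $\Olu$. Writing $\Olu=\{\pm u_l^m\}$, the image $N_{L/l}(\OLU)$ corresponds, modulo $\pm1$, to $u_l^{k\Z}$ for some positive integer $k$. Since $\ker N_{L/l}=E_{L/l}\subseteq E^L$ and $N_{L/l}(\Olu)$ already fills out $u_l^{2\Z}$, the index $Q=[\OLU:E^L]$ equals $[N_{L/l}(\OLU):N_{L/l}(E^L)]=[u_l^{k\Z}:u_l^{2\Z}]=2/k$ when $k\mid 2$. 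The point is that $k\in\{1,2\}$: because $N_{l/\Q}\circ N_{L/l}=N_{L/\Q}$ and $N_{L/\Q}(u)=\pm1$ forces $N_{l/\Q}(N_{L/l}(u))=\pm1$, the image $N_{L/l}(\OLU)$ consists of units of $l$ whose norm to $\Q$ is $\pm1$, which is automatic for all of $\Olu$; so the only constraint is whether $u_l$ itself (rather than merely $u_l^2$) is hit. Hence $k=1$ iff $u_l\in N_{L/l}(\OLU)$, giving $Q=2$, and $k=2$ otherwise, giving $Q=1$.

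This immediately yields the stated equivalence: $Q=2$ if and only if $k=1$, i.e.\ if and only if there exists $u_*\in\OLU$ with $N_{L/l}(u_*)=\pm u_l$. The main obstacle I anticipate is pinning down the exact image $N_{L/l}(\Olu)=u_l^{2\Z}$ and verifying that no intermediate index beyond $\{1,2\}$ can occur --- in other words, ruling out that $N_{L/l}(\OLU)$ could be a proper subgroup of $u_l^{\Z}$ of index larger than $2$. Controlling this requires the fact that every unit of $l$ of norm $\pm1$ to $\Q$ (which is all of $\Olu$) lifts under the norm from $L$ at least up to index $2$, a statement that follows from Hilbert 90 / cohomological vanishing for the cyclic group $G$ applied to the unit group, or more elementarily from Hasse's explicit analysis \cite{Ha1948}. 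Since the lemma is attributed to Hasse, I would ultimately invoke that reference for the delicate surjectivity-up-to-index-$2$ input rather than reprove it; the remaining bookkeeping relating $k$ to $Q$ and to the existence of $u_*$ is then a direct index computation.
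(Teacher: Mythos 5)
Your proof is correct, and it necessarily takes a different route from the paper, because the paper gives no argument at all for this lemma: its ``proof'' is the single line ``See Satz 16, p.\,37 of Hasse \cite{Ha1948}.'' Your argument is a sound, essentially self-contained replacement. The core is: $\sigma^2$ fixes $l$ pointwise, so $N_{L/l}(\eta)=\eta\,\sigma^2(\eta)=\eta^2$ for $\eta\in\Olu$; composing $N_{L/l}$ with the projection $\Olu\to\Olu/\{\pm1\}\cong\Z$ gives a homomorphism $\phi$ whose kernel is exactly $E_{L/l}$, with $\phi(E^L)=2\Z$ (here $E^L=\Olu\cdot E_{L/l}$, since both subgroups are $G$-stable, so the $G$-module they generate is just their product) and $\phi(\OLU)=k\Z\supseteq\phi(\Olu)=2\Z$, forcing $k\mid 2$; since $E_{L/l}=\ker\phi\subseteq E^L$, the correspondence theorem gives $Q=[\phi(\OLU):\phi(E^L)]=2/k\in\{1,2\}$, and $k=1$ holds precisely when some $u_*\in\OLU$ has $N_{L/l}(u_*)=\pm u_l$. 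Three remarks. First, your opening formula writing $N_{L/l}(\eta)$ as a product over all four conjugates is the norm to $\Q$, not to $l$; you immediately self-correct, but the corrected statement $N_{L/l}(\eta)=\eta^2$ is the one to keep. Second, your displayed sequence is exact only after quotienting $\Olu$ by $\{\pm1\}$, since $\ker N_{L/l}=\{u:N_{L/l}(u)=1\}$ need not equal $E_{L/l}$. Third, and most importantly, the ``main obstacle'' you anticipate at the end is vacuous: an image of index larger than $2$ in $u_l^{\Z}$ is already ruled out by your own observation that $\phi(\OLU)$ contains $\phi(\Olu)=2\Z$, so no Hilbert 90 and no appeal to Hasse is needed --- your proof stands on its own. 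What the paper's citation buys instead is Hasse's finer structural output: later in \textsection\,4 the paper invokes ``the proof of Lemma \ref{ind2}'' to assert that when $Q=2$ every unit is uniquely $\pm u_*^{n_0}u_l^{n_1}u_0^{n_2}\sigma(u_0)^{n_3}$ with $n_0\in\{0,1\}$; that refinement comes from Hasse's Satz itself, and your index computation alone does not deliver it (though it could be extended to, via $\OLU=E^L\cup u_*E^L$ together with Lemma \ref{hassest}).
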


\begin{lemma} \label{hassest}
    There exists an element~$u_0$ of~$E_{L/l}$ such that~$-1,u_0$ and its conjugate~$\sigma(u_0)$ generates~$E_{L/l}$.
\end{lemma}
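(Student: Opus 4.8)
The plan is to prove Lemma~\ref{hassest}, which asserts that $E_{L/l}$ (the group of relative units, i.e.\ units $w$ with $N_{L/l}(w)=\pm 1$) is generated over $\langle -1\rangle$ by a single element $u_0$ together with its conjugate $\sigma(u_0)$. First I would determine the rank of $E_{L/l}$ as a $\Z$-module modulo torsion. Since $L$ is totally real of degree $4$, we have $\operatorname{rank}(\OLU)=3$, and $\Olu$ contributes rank $1$. The norm map $N_{L/l}\colon\OLU\to\Olu$ has image of finite index, so its kernel $E_{L/l}$ has rank $3-1=2$. Thus modulo $\{\pm1\}$ the group $E_{L/l}$ is free of rank $2$, and the content of the lemma is that one can choose a $\Z$-basis of the form $\{u_0,\sigma(u_0)\}$, i.e.\ a single $G$-orbit generator suffices.

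The key structural input is the action of $G=\langle\sigma\rangle\cong\Z/4\Z$ on $E_{L/l}/\{\pm1\}\cong\Z^2$. I would analyze this as a module over $\Z[G]$, or more precisely over the quotient that acts on relative units. Because $N_{L/l}(w)=w\,\sigma^2(w)$ for $w\in\OLU$, the condition $N_{L/l}(w)=\pm1$ forces $\sigma^2(w)=\pm w^{-1}$ modulo roots of unity; hence on $E_{L/l}/\{\pm1\}$ the automorphism $\sigma^2$ acts as $-1$ (inversion, written additively). Consequently $\sigma$ acts as an order-$4$ automorphism of $\Z^2$ whose square is $-\mathrm{id}$, so its characteristic polynomial is $x^2+1$ and $E_{L/l}/\{\pm1\}$ becomes a module over $\Z[x]/(x^2+1)\cong\Z[i]$. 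The second step is therefore to identify $E_{L/l}/\{\pm1\}$ with an ideal of the ring $\Z[i]$ of Gaussian integers, on which $\sigma$ acts as multiplication by $i$.

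The decisive point is that $\Z[i]$ is a principal ideal domain (indeed Euclidean), so any $\Z[i]$-submodule of $\Z[i]\otimes\Q$ that is finitely generated and of the correct rank is generated by a single element $u_0$ over $\Z[i]$; spelling this out, $u_0$ and $\sigma(u_0)=i\cdot u_0$ form a $\Z$-basis, which is exactly the asserted generation of $E_{L/l}/\{\pm1\}$ by $u_0$ and $\sigma(u_0)$. Lifting back through the exact sequence $1\to\{\pm1\}\to E_{L/l}\to E_{L/l}/\{\pm1\}\to 1$ then yields the statement, since $-1$ is included among the generators. I expect the main obstacle to be the careful verification that $\sigma^2$ acts by inversion on the full torsion-free quotient rather than merely on a finite-index subgroup, and relatedly the bookkeeping of signs: a priori $N_{L/l}(w)$ can be $-1$ or $u_l^{\text{even}}$ times a unit, and one must confirm that passing to $E_{L/l}$ (the \emph{kernel} up to sign) cleanly realizes the $\Z[i]$-module structure without an unexpected index. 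Rather than reprove this from scratch, I would invoke Satz~15 of Hasse~\cite{Ha1948}, as the preceding Lemma~\ref{ind2} already cites the companion Satz~16; the argument above is essentially the Galois-module computation underlying Hasse's result.
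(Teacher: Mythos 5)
Your proposal is correct, and it takes a genuinely different route from the paper: the paper's entire proof of Lemma~\ref{hassest} is a citation (``See Satz 22, p.\,40 of Hasse \cite{Ha1948}''), whereas you give a self-contained structural argument. Your steps all check out: (i) $E_{L/l}$ has rank $2$, being the kernel of $w \mapsto N_{L/l}(w) \bmod \{\pm 1\}$, whose image has rank $1$ since it contains the squares of $\Olu$; (ii) on $M = E_{L/l}/\{\pm 1\}$ the element $\sigma^2$ acts as inversion, because $w\,\sigma^2(w) = N_{L/l}(w) = \pm 1$ holds \emph{exactly} for every $w \in E_{L/l}$ by the very definition of $E_{L/l}$ --- so the worry you flag at the end about inversion holding only on a finite-index subgroup is vacuous, there is no index to control; (iii) hence $M$ is a module over $\Z[x]/(x^2+1) \cong \Z[i]$, it is $\Z[i]$-torsion-free (if $\alpha m = 0$ with $\alpha \neq 0$, then $\alpha\bar{\alpha}\, m = 0$ in the $\Z$-torsion-free group $M$, so $m=0$), so by the structure theorem over the PID $\Z[i]$ it is free, and the $\Z$-rank count $2r=2$ forces $M \cong \Z[i]$; a $\Z[i]$-generator $u_0$ then satisfies $M = \Z u_0 + \Z\,\sigma(u_0)$, and lifting through $1 \to \{\pm 1\} \to E_{L/l} \to M \to 1$ gives the lemma. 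The only slip is bibliographic: the relevant result of Hasse is Satz 22 (p.\,40), not Satz 15, though since you correctly recalled that Lemma~\ref{ind2} rests on Satz 16 this is a guess made blind and costs nothing mathematically. As for what each approach buys: the paper's one-line citation is economical and defers to the classical source, while your argument makes the section self-contained and exposes \emph{why} the lemma is true --- the relative units of a real cyclic quartic field form a rank-one module over the Gaussian integers, and the existence of the single generator $u_0$ is precisely the principality of ideals in $\Z[i]$.
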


Define $u_0$ as in the lemma.
We have
\begin{equation*}
    E^L=\{\pm u_l^{m_1}u_0^{m_2}\sigma(u_0)^{m_3} \ | \ (m_1,m_2,m_3) \in \Z^3 \}.
\end{equation*}
We first compute the 1-norm of the elements of~$\bigwedge^2 \LOG(E^L)$. Note that since~$u_0$ is relative unit of~$L$, 
$\sigma^2(u_0)=\pm \frac{1}{u_0}$.
We can compute~$\LOG$'s as follows:
\begin{align} 
\begin{split}\label{LOGS}
    \LOG(u_l) = (W_1,-W_1,W_1,-W_1) \\ 
    \LOG(u_0) = (W_2,W_3,-W_2,-W_3) \\
    \LOG(\sigma(u_0)) = (W_3,-W_2,-W_3,W_2)     
\end{split}
\end{align}
where~$W_1=\log(u_l),W_2=\log(\abs{u_0}),W_3=\log(\abs{\sigma(u_0)})$.
Here, the order of the basis of~$\R^{\mathcal{A}_L}$ is 
$\delta^{id},\delta^{\sigma},\delta^{\sigma^2},\delta^{\sigma^3}$. 
Note that~$W_1,W_2,W_3$ are nonzero since~$u_l,u_0,\sigma(u_0)$ are not~$\pm1$.
Then the wedge product of these terms are:
\begin{align*}
    \LOG(u_l)\wedge\LOG(u_0)=
    & (\ \ Y_4,-Y_4,Y_5,\ \ Y_5,-Y_2,\ \ Y_3) \\
    \LOG(u_l)\wedge\LOG(\sigma(u_0))=
    & (-Y_5,\ \ Y_5,Y_4,\ \ Y_4,-Y_3,-Y_2) \\
    \LOG(u_0)\wedge\LOG(\sigma(u_0))=
    & (-Y_1,-Y_1,Y_1,-Y_1,\ \ \ \ 0,\ \ \ \ 0) 
\end{align*}
where~$Y_1=W_2^2+W_3^2,Y_2=2W_1W_2,Y_3=2W_1W_3,Y_4=W_1W_2+W_1W_3, 
Y_5=W_1W_2-W_1W_3$. 
Here the order of the basis of~$\bigwedge^2\R^{\mathcal{A}_L}$ is 
$\delta^{id}\wedge\delta^{\sigma}, 
\delta^{\sigma^2}\wedge\delta^{\sigma^3},
\delta^{id}\wedge\delta^{\sigma^3}, \delta^{\sigma}\wedge\delta^{\sigma^2},
\delta^{id}\wedge\delta^{\sigma^2},\delta^{\sigma}\wedge\delta^{\sigma^3}$.

For any nonzero element ~$w \in \bigwedge^2 \LOG(E^L)$, it can be written as 
\begin{align*}
    w=&\, n_1\cdot \LOG(u_l)\wedge\LOG(u_0)+n_2 \cdot \LOG(u_l)\wedge\LOG(\sigma(u_0)) \\
    &+ n_3 \cdot  \LOG(u_0)\wedge\LOG(\sigma(u_0))
\end{align*}
where~$n_1,n_2,n_3$ are integers.
Then~$\norm{w}{1}=f(n_1,n_2,n_3)$ where the map~$f:\Z^3 \rightarrow \R$ is given as
\begin{align*}
    f(n_1,n_2,n_3)=&\, \abs{n_1Y_4-n_2Y_5-n_3Y_1}+\abs{-n_1Y_4+n_2Y_5-n_3Y_1}\\
    &+\abs{n_1Y_5+n_2Y_4+n_3Y_1}+\abs{n_1Y_5+n_2Y_4-n_3Y_1}\\
    &+\abs{-n_1Y_2-n_2Y_3}+\abs{n_1Y_3-n_2Y_2}.   
\end{align*}
Applying \eqref{summax}, above equation becomes
\begin{align*}
    f(n_1,n_2,n_3)=&2\max\{\abs{n_1Y_4-n_2Y_5},\abs{n_3Y_1}\} 
    +2\max\{\abs{n_1Y_5+n_2Y_4},\abs{n_3Y_1}\}\\
    &+\abs{-n_1Y_2-n_2Y_3}+\abs{n_1Y_3-n_2Y_2}.   
\end{align*}
In particular, we have 
\begin{align} \label{n3case}
    f(n_1,n_2,n_3)\geq 4\abs{n_3Y_1}=4\abs{n_3}(W_2^2+W_3^2)
\end{align}
and
\begin{align*}
    f(n_1,n_2,n_3)\geq&\, 2\abs{n_1Y_4-n_2Y_5}+2\abs{n_1Y_5+n_2Y_4}\\
    &+\abs{-n_1Y_2-n_2Y_3}+\abs{n_1Y_3-n_2Y_2}.
\end{align*}
If~$(n_1,n_2)\neq(0,0)$, then by Lemma~\ref{absin} below,
\begin{align*}
    f(n_1,n_2,n_3)&\geq 2\abs{Y_4}+2\abs{Y_5}+\abs{Y_2}+\abs{Y_3}\\
    &=2\abs{W_1W_2+W_1W_3}+2\abs{W_1W_2-W_1W_3}+2\abs{W_1W_2}+2\abs{W_1W_3}
\end{align*}
and applying \eqref{summax} once more, we have
\begin{align} \label{n12case}
    f(n_1,n_2,n_3)\geq 2W_1(2\max\{\abs{W_2},\abs{W_3}\}+\abs{W_2}+\abs{W_3})
\end{align}
Using these results, we prove Theorem~\ref{main} for each case~$Q=1$ and~$Q=2$.

\begin{lemma} \label{absin}
    Let~$X, Y \in \R$. For any~$(m,n) \in \Z^2 \backslash \{(0,0)\}$, we have
    \begin{equation*}
        \abs{mX+nY}+\abs{nX-mY} \geq \abs{X}+\abs{Y}.
    \end{equation*}
\end{lemma}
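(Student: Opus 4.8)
The plan is to recognize the pair of linear forms on the left-hand side as the image of the vector $(X,Y)$ under a conformal linear map, and then to play off the $1$-norm against the $2$-norm. Concretely, writing $u=mX+nY$ and $w=nX-mY$, one has $\binom{u}{w}=A\binom{X}{Y}$ with $A=\begin{pmatrix} m & n \\ n & -m \end{pmatrix}$. The decisive observation is that $A^{T}A=(m^{2}+n^{2})\,I$, so $A$ scales the Euclidean norm by exactly $\sqrt{m^{2}+n^{2}}$; that is, $\norm{(u,w)}{2}=\sqrt{m^{2}+n^{2}}\,\norm{(X,Y)}{2}$. This single identity is the heart of the argument and lets me avoid any case analysis on the signs inside the absolute values.

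Next I would chain the two standard planar norm comparisons. On the one hand $\norm{(u,w)}{1}\geq\norm{(u,w)}{2}$, and on the other hand Cauchy--Schwarz gives $\norm{(X,Y)}{2}\geq\tfrac{1}{\sqrt 2}\norm{(X,Y)}{1}$. Combining these with the scaling identity yields
\begin{equation*}
    \abs{mX+nY}+\abs{nX-mY}=\norm{(u,w)}{1}\geq\sqrt{m^{2}+n^{2}}\,\norm{(X,Y)}{2}\geq\sqrt{\tfrac{m^{2}+n^{2}}{2}}\,(\abs{X}+\abs{Y}).
\end{equation*}
Hence the claimed inequality follows at once whenever $m^{2}+n^{2}\geq 2$.

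It then remains to treat the integer pairs with $m^{2}+n^{2}=1$, which the lossy factor $\sqrt 2$ prevents the norm bound from reaching; for integers $(m,n)\neq(0,0)$ the only value of $m^{2}+n^{2}$ below $2$ is $1$, so these pairs are precisely $(m,n)\in\{(\pm1,0),(0,\pm1)\}$. For each of the four, a direct substitution shows that $\{\abs{mX+nY},\abs{nX-mY}\}=\{\abs{X},\abs{Y}\}$ as a set, so the left-hand side equals $\abs{X}+\abs{Y}$ and the inequality holds with equality. Together with the range $m^{2}+n^{2}\geq 2$, this exhausts $\Z^{2}\setminus\{(0,0)\}$.

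The only genuine obstacle is exactly this gap at $m^{2}+n^{2}=1$: the comparison between $\norm{\cdot}{1}$ and $\norm{\cdot}{2}$ in the plane is tight by a factor $\sqrt 2$, so it cannot by itself dispose of the smallest admissible pairs, which instead have to be checked by hand (and are precisely the equality cases). Everything else reduces to the linear-algebra identity $A^{T}A=(m^{2}+n^{2})I$ together with elementary norm inequalities.
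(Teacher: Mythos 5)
Your proof is correct, but it takes a genuinely different route from the paper. The paper argues by brute-force case analysis: after normalizing to $Z=Y/X\in(0,1]$ it reduces the claim to $\abs{m+nZ}+\abs{n-mZ}\geq 1+Z$ and then splits into the cases $m=0$, $m\geq 2$ (handled by the triangle inequality via $\abs{n-mZ}\geq\abs{Z(n-mZ)}$), and $m=1$ with subcases on the sign of $n$. You instead exploit the structural fact that the matrix $A=\begin{pmatrix} m & n\\ n & -m\end{pmatrix}$ satisfies $A^{T}A=(m^2+n^2)I$, so the map $(X,Y)\mapsto(mX+nY,\,nX-mY)$ is a similarity expanding the Euclidean norm by $\sqrt{m^2+n^2}$; combined with the planar norm comparisons $\norm{\cdot}{1}\geq\norm{\cdot}{2}\geq\tfrac{1}{\sqrt2}\norm{\cdot}{1}$, this settles every pair with $m^2+n^2\geq 2$ in one stroke, and the four remaining unit pairs $(\pm1,0),(0,\pm1)$ are immediate identities. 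Your argument is shorter and more conceptual: it explains \emph{why} the lemma holds (integrality forces the conformal scaling factor to be at least $\sqrt2$ unless the map is a signed permutation of coordinates), it extends verbatim to arbitrary real $(m,n)$ with $m^2+n^2\geq 2$, and you correctly identify the only place where the $\sqrt2$ loss in the norm comparison pinches. The paper's argument buys elementarity -- nothing beyond the triangle inequality -- at the cost of more bookkeeping. One trivial caveat: your parenthetical that the unit pairs are ``precisely the equality cases'' should be read as ``the pairs giving equality identically in $X,Y$''; for special values such as $X=Y$, other pairs like $(m,n)=(1,1)$ also achieve equality. This does not affect the proof.
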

\begin{proof}
    The inequality is trivial if~$X=0$ or~$Y=0$. 
    Assume that they are both nonzero.
    We may further assume that they are positive and~$X\geq Y$.
    Let~$Z:=Y/X\leq 1$.
    Since
    \begin{equation*}
        \abs{mX+nY}+\abs{nX-mY}=\abs{X}(\abs{m+nZ}+\abs{n-mZ}),
    \end{equation*}
    we only need to show the inequality
    \begin{equation*}
        \abs{m+nZ}+\abs{n-mZ}\geq 1+Z.
    \end{equation*}
    Without loss of generality, assume~$m\geq 0$.
    If~$m=0$,~$\abs{nZ}+\abs{n}\geq 1+Z$ for any nonzero integer~$n$.
    If~$m\geq 2$, 
    \begin{align*}
    \abs{m+nZ}+\abs{n-mZ}&\geq\abs{m+nZ}+\abs{Z(n-mZ)}
        \;\geq\abs{(m+nZ)-Z(n-mZ)} \\
        &=\abs{m(1+Z^2)}\;\geq m \;\geq 2 \;\geq 1+Z.
    \end{align*}
    Now consider the case~$m=1$. The case~$n=0$ is trivial. If~$n\leq-1$, then
    \begin{equation*}
        \abs{1+nZ}+\abs{n-Z}\geq{-n+Z}\geq1+Z,
    \end{equation*}
    and if~$n \geq 1$,
    \begin{equation*}
        \abs{1+nZ}+\abs{n-Z}\geq{1+nZ}\geq1+Z.
    \end{equation*}    
\end{proof}

%-----------------------------------------------------------
\subsection{The Case Q=1}
Since~$\OLU=E^L$, for any nonzero~$w\in\bigwedge^2\LOG(\OLU)$, its norm is of the form ~$\norm{w}{1}=f(n_1,n_2,n_3)$ where~$(n_1,n_2,n_3)\neq(0,0,0)$.
If~$n_3\neq0$, then \eqref{n3case} gives
\begin{equation*}
    \norm{w}{1}\geq 4(W_2^2+W_3^2)
\end{equation*}
and if~$n_3=0$, then~$(n_1,n_2)\neq(0,0)$, so \eqref{n12case} gives
\begin{align*}
    \norm{w}{1}\geq 2W_1(2\max\{\abs{W_2},\abs{W_3}\}+\abs{W_2}+\abs{W_3}).
\end{align*}
Therefore we have
\begin{equation} \label{q1final}
    \norm{w}{1}\geq \min\{4(W_2^2+W_3^2), 2W_1(2\max\{\abs{W_2},\abs{W_3}\}+\abs{W_2}+\abs{W_3}) \}
\end{equation}
for any nonzero~$w\in\bigwedge\LOG(\OLU)$.

We use the following theorem from Pohst \cite{Po1978} to give a lower bound for \eqref{q1final}.
\begin{theorem} \label{PO}
    Let~$L$ be a totally real number field and~$u\in\OLU$ with~$u\neq\pm1$. Then we have
    \begin{equation*}
        \norm{\LOG(u)}{2}^2\geq[L:\Q]\lofs.
    \end{equation*} 
\end{theorem}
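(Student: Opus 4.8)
The plan is to prove this by an auxiliary-function argument that replaces the full integrality of $u$ by two easily extracted constraints and then reduces everything to a single strictly convex inequality in one real variable. First I would reduce to the case where $u$ generates $L$. Writing $d:=[\Q(u):\Q]$ and $n:=[L:\Q]$, each of the $d$ distinct real conjugates $\beta_1,\dots,\beta_d$ of $u$ occurs exactly $n/d$ times among the embeddings $\sigma_i(u)$, so
\[
    \norm{\LOG(u)}{2}^2 = \frac{n}{d}\sum_{j=1}^d(\log\abs{\beta_j})^2 .
\]
Since $u\neq\pm1$ forces $d\geq2$, it therefore suffices to prove $\sum_{j=1}^d(\log\abs{\beta_j})^2 \geq d\,\lof^2$ for a totally real unit of degree $d\geq2$.

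Setting $s_j:=\log\abs{\beta_j}$, I would extract exactly two arithmetic constraints. Because $u$ is a unit, $\prod_j\beta_j=\pm1$, which gives the linear constraint $\sum_j s_j=0$. Because the minimal polynomial $f\in\Z[X]$ of $u$ is monic and irreducible of degree $d\geq2$, we have $f(\pm1)\neq0$, and $\prod_j(\beta_j^2-1)=f(1)f(-1)$ is a nonzero integer; since $\abs{\beta_j^2-1}=\abs{e^{2s_j}-1}$, this yields the second constraint $\sum_j\log\abs{e^{2s_j}-1}=\log\abs{f(1)f(-1)}\geq0$.

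The heart of the proof is a pointwise inequality combining these. Setting $\nu:=\tfrac{2\lof}{\sqrt5}$, I claim that for every real $s\neq0$,
\[
    s^2 \;\geq\; \lof^2 \;-\;\nu s \;+\;\nu\log\abs{e^{2s}-1}.
\]
Summing over $j=1,\dots,d$ and using $\sum_j s_j=0$ together with $\sum_j\log\abs{e^{2s_j}-1}\geq0$ gives $\sum_j s_j^2\geq d\,\lof^2$, finishing the proof. To verify the pointwise claim I would rewrite its right-hand side via $\log\abs{e^{2s}-1}-s=\log(2\abs{\sinh s})$, reducing it to $h(s):=s^2-\nu\log(2\abs{\sinh s})-\lof^2\geq0$. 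This $h$ is even, so I restrict to $s>0$; then $h''(s)=2+\nu/\sinh^2 s>0$, so $h$ is strictly convex on $(0,\infty)$, while $\coth(\lof)=\sqrt5$ gives $h'(\lof)=2\lof-\nu\sqrt5=0$ and $2\sinh(\lof)=1$ gives $h(\lof)=0$. Convexity then forces $h\geq0$, with equality only at $s=\pm\lof$, matching the extremal unit $\tfrac{1+\sqrt5}{2}\in\Q(\sqrt5)$.

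The main obstacle is discovering the correct auxiliary inequality: one must recognize that the full integrality of $u$ can be replaced, with no loss in the constant, by the single resultant estimate attached to the test polynomial $X^2-1$, and that pairing this with the norm relation $\sum_j s_j=0$ produces a strictly convex one-variable function whose unique minimum sits exactly at $\pm\lof$. Calibrating the weight $\nu$ and the linear multiplier $-\nu$ correctly—equivalently, imposing stationarity and equality at $s=\pm\lof$—is precisely what makes the two constraints conspire to give the golden-ratio bound; any other choice either fails nonnegativity or yields a weaker constant.
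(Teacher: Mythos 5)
Your proof is correct, and it takes a genuinely different route from the paper: the paper does not prove this statement at all, but imports it from Pohst \cite{Po1978}, pointing to the Bemerkung on p.~102 and inequality (11) of the Lemma on p.~98 there. Your argument is self-contained and checks out at every step: the reduction to $\Q(u)$ is valid, since each of the $d$ real conjugates of $u$ occurs exactly $n/d$ times among the embeddings of $L$ and $u\neq\pm1$ forces $d=[\Q(u):\Q]\geq 2$; the two constraints are correctly extracted ($\prod_j\beta_j=\pm1$ gives $\sum_j s_j=0$, while irreducibility of the degree-$d$ minimal polynomial $f$ gives $f(\pm1)\neq0$, so $\prod_j(\beta_j^2-1)=f(1)f(-1)$ is a nonzero integer and $\sum_j\log\abs{e^{2s_j}-1}\geq0$, all $s_j$ being nonzero because a real conjugate of modulus $1$ would be $\pm1$); and the auxiliary inequality holds, since $h(s)=s^2-\nu\log(2\abs{\sinh s})-\lof^2$ with $\nu=2\lof/\sqrt{5}$ is even, strictly convex on $(0,\infty)$, and satisfies $h(\lof)=h'(\lof)=0$ by the identities $2\sinh(\lof)=1$ and $\coth(\lof)=\sqrt{5}$, so $h\geq0$; summing the pointwise bound over conjugates then yields $\sum_j s_j^2\geq d\,\lof^2$ and hence $\norm{\LOG(u)}{2}^2=\tfrac{n}{d}\sum_j s_j^2\geq[L:\Q]\lof^2$. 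As for what each approach buys: the paper's citation keeps the exposition short but leaves the key input as a black box inside Pohst's regulator estimate, whereas your Schinzel--Smyth-style resultant-plus-auxiliary-function argument is elementary and transparent, pinpoints the extremal case $u=\tfrac{1+\sqrt{5}}{2}$ (matching equality in the bound), and isolates the only arithmetic facts actually used: the unit norm relation and the integrality of the resultant of $u$ with $X^2-1$.
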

\begin{proof}
    Bemerkung in p. 102 of Pohst \cite{Po1978} states that the Lemma in p. 98 applies for ~$M$ of the equation (10) in p. 97. Then the inequality (11) of the Lemma gives the required result.
\end{proof}
Then from the above theorem and \eqref{LOGS}, 
\begin{equation} \label{nonfund}
    W_2^2+W_3^2=\frac{1}{2}\norm{\LOG(u_0)}{2}^2\geq2\lofs.
\end{equation}
Under this constraint, the following elementary fact will be useful.
\begin{lemma} \label{cal}
    Let $x,y \in \R$ with $x^2+y^2\ge R^2$ for some positive real number $R$.
    Then,
    \begin{equation*}
        2\max\{\abs{x},\abs{y}\}+\abs{x}+\abs{y}\ge 2\sqrt{2}R.
    \end{equation*}
\end{lemma}

\begin{proof}
    Without loss of generality, assume $x\ge y\ge0$ and $x^2+y^2=R^2$.
    Then there exists $\theta\in [0,\tfrac{\pi}{4}]$ such that
    $x=R\cos\theta$ and $y=R\sin\theta$.
    Hence
    \begin{equation*}
        2\max\{\abs{x},\abs{y}\}+\abs{x}+\abs{y}=3x+y=\sqrt{10}R\sin(\theta+\phi),
    \end{equation*}
    where $\phi=\arctan(3)$.
    Since $\theta+\phi$ ranges over $[\arctan(3),\arctan(3)+\tfrac{\pi}{4}]\subset (0,\pi)$ and
    $\sin(\phi)=\tfrac{3}{\sqrt{10}}$ is larger than $\sin(\phi+\tfrac{\pi}{4})=\tfrac{2}{\sqrt{5}}$,
    the minimum of $\sin(\theta+\phi)$ occurs at $\theta=\tfrac{\pi}{4}$.
    The minimum value is therefore
        $\sqrt{10}\,R \cdot \tfrac{2}{\sqrt{5}} \;=\; 2\sqrt{2}\,R$.
\end{proof}

Combining Lemma~\ref{cal} with \eqref{nonfund}, we obtain
\begin{equation*}
    2\max\{\abs{W_2},\abs{W_3}\}+\abs{W_2}+\abs{W_3}\ge4\lof.
\end{equation*}
Since~$u_l$ is a fundamental unit of quadratic field, by Lemma~\ref{lwbd},
\begin{equation} \label{fund}
    W_1=\log(u_l)\geq\lof
\end{equation}
holds, and with the previous inequality,
\begin{equation*}
    2W_1(2\max\{\abs{W_2},\abs{W_3}\}+\abs{W_2}+\abs{W_3})\ge8\lofs.
\end{equation*}
With \eqref{nonfund}, the inequality \eqref{q1final} becomes
\begin{equation}\label{q1bound}
      \norm{w}{1}\geq8\lofs
\end{equation}
for any nonzero~$w\in\bigwedge^2\LOG(\OLU)$.
Thus Theorem~\ref{main} holds for the case~$Q=1$.

%-----------------------------------------------------------

\subsection{The Case Q=2}

We use the following lemma due to Hasse \cite[Satz 28]{Ha1948}.

\begin{lemma} \label{u*}
If $Q=2$, there exists a unique unit $u_*>0$ of $L$ such that
\[
|N_{L/l}(u_*)| = u_l, \qquad |u_* \sigma(u_*)| = \abs{u_0}.
\]
\end{lemma}

Define~$u_*$ as in the lemma. Then
\begin{equation} \label{ust}
    u_*^2=\frac{u_*\sigma^2(u_*)\cdot u_*\sigma(u_*)}{\sigma(u_*\sigma(u_*))}
    =\pm\frac{u_lu_0}{\sigma(u_0)}
\end{equation}
and therefore ~$\sigma(u_0)=\pm u_lu_0u_*^{-2}$.
The proof of Lemma~\ref{ind2} shows that when~$Q=2$, every unit~$u\in\OLU$ can be written uniquely as the form~$\pm u_*^{n_0}u_l^{n_1}u_0^{n_2}\sigma(u_0)^{n_3}$ where~$n_0$ is~$0$ or~$1$ and~$n_1,n_2,n_3$ are integers.
Thus ~$\OLU$ is generated by~$-1,u_l,u_0,u_*$.
Hence ~$\bigwedge^2\LOG(\OLU)$ is generated by~$\LOG(u_l)\wedge\LOG(u_0), \LOG(u_l)\wedge\LOG(u_*)$ and ~$\LOG(u_0)\wedge\LOG(u_*)$.
From \eqref{ust}, we also have 
\begin{equation} \label{LOG*}
    \LOG(u_*)=\frac{1}{2}(\LOG(u_l)+\LOG(u_0)-\LOG(\sigma(u_0)))
\end{equation}
and thus
\begin{align*}
\LOG(u_l)\wedge\LOG(u_*)&=\frac{1}{2}(\LOG(u_l)\wedge\LOG(u_0)-\LOG(u_l)\wedge\LOG(\sigma(u_0))),\\
\LOG(u_0)\wedge\LOG(u_*)&=-\frac{1}{2}(\LOG(u_l)\wedge\LOG(u_0)+\LOG(u_0)\wedge\LOG(\sigma(u_0))).
\end{align*}
Hence for any~$w\in\bigwedge^2\LOG(\OLU)$, 
\begin{align*}
    2w&=n_1\cdot \LOG(u_l)\wedge\LOG(u_0)+n_2 \cdot \LOG(u_l)\wedge\LOG(\sigma(u_0)) \\
    &+ n_3 \cdot  \LOG(u_0)\wedge\LOG(\sigma(u_0))
\end{align*}
where~$n_1,n_2,n_3$ are integers and~$n_1+n_2+n_3$ is even
and thus 
\begin{equation} \label{halff}
    \norm{w}{1}=\frac{1}{2}f(n_1,n_2,n_3).
\end{equation}

We now proceed to give a lower bound for~$f(n_1,n_2,n_3)$ where~$n_1+n_2+n_3$ is even and~$(n_1,n_2,n_3)\neq(0,0,0)$.
If~$(n_1,n_2)=(0,0)$, then~$\abs{n_3}\geq 2$, so \eqref{n3case} gives
\begin{equation*}
    f(n_1,n_2,n_3) \geq 8(W_2^2+W_3^2).
\end{equation*}
Since \eqref{nonfund} still holds in the~$Q=2$ case, we have
\begin{equation} \label{n3q2}
    f(n_1,n_2,n_3) \geq 16\lofs.
\end{equation}
Note that \eqref{fund} also still holds.

Now consider the case~$(n_1,n_2)\neq(0,0)$.
Since~$\LOG(u_l),\LOG(u_0), \LOG(\sigma(u_0))~$ are perpendicular as follows from \eqref{LOGS}, we have from \eqref{LOG*}
\begin{align*}
    \norm{\LOG(u_*)}{2}^2&=\frac{1}{4}(\norm{\LOG(u_l)}{2}^2+\norm{\LOG(u_0)}{2}^2+\norm{\LOG(\sigma(u_0))}{2}^2)\\
    &=W_1^2+W_2^2+W_3^2.
\end{align*}
Then Theorem~\ref{PO} gives
\begin{equation*}
    W_1^2+W_2^2+W_3^2\geq4\lofs.
\end{equation*}
Letting $S=\sqrt{W_2^2+W_3^2}$, the inequality becomes
\begin{equation} \label{circ}
    W_1^2+S^2\ge 4\lofs,
\end{equation}
while \eqref{nonfund} gives $S\ge \sqrt{2}\lof$.

We claim that 
\begin{equation}\label{WS}
    W_1S\ge\sqrt{3}\lofs.
\end{equation}
Indeed, if $W_1\ge \sqrt{2}\lof$, then since $S\ge \sqrt{2}\lof$, 
\begin{equation*}
    W_1S\ge 2\lofs>\sqrt{3}\lofs.
\end{equation*}
Otherwise, by\eqref{fund}, we have $W_1\in\left[\lof,\sqrt{2}\lof\right)$.
Then
\begin{equation*}
    W_1S\ge\sqrt{W_1^2(4\lofs-W_1^2)}=\sqrt{4\log^4\left(\tfrac{1+\sqrt{5}}{2}\right)-\left(W_1^2-2\lofs\right)^2}.
\end{equation*}
This expression attains its minimum at $W_1=\lof$, yielding $\sqrt{3}\lofs$.
This proves the claim.

Combining Lemma ~\ref{cal} with \eqref{WS}, we obtain
\begin{align*}
    2W_1(2\max\{\abs{W_2},\abs{W_3}\}+\abs{W_2}+\abs{W_3})\ge 4\sqrt{2}W_1S\ge 4\sqrt{6}\lofs.
\end{align*}
With \eqref{n12case}, we have 
\begin{equation*}
    f(n_1,n_2,n_3) \ge 4\sqrt{6}\lof^2.
\end{equation*}
Since~$16\lofs>4\sqrt{6}\lofs$, by \eqref{n3q2}, the above inequality holds for any
$(n_1,n_2,n_3)\neq(0,0,0)$ with~$n_1+n_2+n_3$ even.
Therefore, with \eqref{halff},
\begin{equation*} 
    \norm{w}{1}\ge 2\sqrt{6}\lofs
\end{equation*}
for any nonzero~$w\in\bigwedge^2\LOG(\OLU)$.
This concludes the proof of Theorem~\ref{main}.

%\bibliographystyle{plain}
%\bibliography{mybib}

\end{document}